\begin{document}
\begin{center}
{\bf\Large On the problem of recovery of Sturm--Liouville operator with two frozen arguments}
\end{center}

\begin{center}
{\bf\large Maria Kuznetsova\footnote{Department of Mathematics, Saratov State University, e-mail: {\it kuznetsovama@info.sgu.ru}}}
\end{center}

\noindent{\bf Abstract.}
{Inverse spectral problems consist in recovering operators by their spectral charac\-teristics.
The problem of recovering the Sturm--Liouville operator with one frozen argument
was studied earlier in works of various authors.
In this paper, we study a uniqueness of recovering operator with two frozen arguments and different coefficients $p, q$ by the spectra of two boundary value problems.
The case considered here is significantly more difficult than the case of one frozen argument, because the operator is no more a one-dimensional perturbation.
We prove that the operator with two frozen arguments, in general case, can not be recovered by the two spectra.
For the uniqueness of recovering, one should impose some conditions on the coefficients.
We assume that the coefficients $p$ and $q$ equal zero on certain segment and prove a uniqueness theorem.
As well, we obtain regularized trace formulae for the two spectra.
The result is formulated in terms of convergence of certain series, which allows us to avoid restrictions on the smoothness of the coefficients.}

\medskip
\noindent {\it Keywords}: inverse spectral problem, frozen argument, nonlocal operator, Sturm--Liouville opera\-tor, regularized trace formula, uniqueness theorem.

\medskip
\noindent {\it 2010 Mathematics Subject Classification}: 34K29, 34A55
\\

\newtheorem{lemma}{Lemma}[section]
\newtheorem{theorem}{Theorem}[section]

\theoremstyle{definition}
\newtheorem{example}{Example}[section]
\newtheorem{problem}{Inverse problem}[section]

\renewcommand{\leq}{\leqslant}
\renewcommand{\geq}{\geqslant}

\numberwithin{equation}{section}
%

\section{Introduction}
In this paper, we consider an inverse spectral problem for a Sturm--Liouville operator with two frozen arguments:
 $$\ell y =  -y''(x) + p(x) y(a) + q(x) y(b), \quad x \in (0, \pi),$$
where $p,q \in L_2(0, \pi)$ are complex-valued, while the parameters $a, b \in (0, \pi)$ are fixed and called frozen arguments. Unlike purely differential operators, studied in the classical theory of inverse spectral problems~\cite{KuM_march,KuM_B,KuM_levitan,KuM_yurko,KuM_high-order}, the operator $\ell y$ is nonlocal.
Nonlocal operators have special spectral properties, see e.g.~\cite{KuM_albeverio, KuM_bolletin,malyug,KuM_nonlocal mc,vlad,con,KuM_nizh10}, and require the development of methods other than the methods of the classical theory of inverse spectral problems.

In the previous works~\cite{KuM_DobHry,KuM_AML,KuM_my-uniform,mmi,
KuM_BV,KuM_BBV,KuM_BK,KuM_albeverio,KuM_nizh10,KuM_wang,KuM_but-hu,
KuM_tsai,KuM_my-froz,KuM_bon-disc}, there were studied the Sturm--Liouville operators with one frozen argument, i.e. when $q=0,$ under various boundary conditions.
The following statement of an inverse spectral problem was considered: given the operator spectrum, recover the coefficient $p.$
The most complete results were obtained for the boundary conditions
 $y^{(\alpha)}(0) = y^{(\beta)}(\pi) = 0,$ where $\alpha,\beta \in \{0,1\}$ denote the order of derivative.
In the rational case $a/\pi \in \mathbb Q,$ certain part of the spectrum may degenerate, i.e. it does not depend on $p,$ and for uniqueness of recovering $p,$ besides the spectrum, additional data is needed, see~\cite{KuM_BV,KuM_BBV,KuM_BK}.
In the irrational case $a/\pi \notin \mathbb Q,$ the degeneration effect does not occur,
and $p$ is uniquely recovered by the spectrum, see~\cite{KuM_wang}. Thus, the uniqueness of recovering one coefficient by one spectrum takes place for a.e. $a \in (0, \pi).$

A general approach to both cases was developed in the works~\cite{KuM_AML,KuM_my-uniform}. Later on, it was generalized on operators of the form $\tilde \ell y = -y''(x) + p(x) y(a) + r(x) y(x),$ see~\cite{mmi}.
In the paper~\cite{KuM_DobHry}, it was noticed that the Sturm--Liouville operator with one frozen argument is a one-dimensional perturbation of the differential operator $\ell_0 y =-y''$ and an inverse problem for the corresponding class of one-dimensional perturbations was studied.
Recently, there appeared an interest to operators with several frozen arguments taken with the same coefficient, see~\cite{preprint}:
 $$\ell_1 y = -y''(x) + \sum_{k=1}^m y(a_k) p(x), \quad m \in \mathbb N.$$
These operators are one-dimensional perturbations of the type that was studied in the work~\cite{KuM_DobHry}.
Their consideration does not lead to a situation significantly different from the case of one frozen argument, and inverse spectral problems can be studied by the methods of the works~\cite{KuM_DobHry,KuM_AML,KuM_my-uniform,mmi}.
At the same time, the operator of interest $\ell y$ is not a one-dimensional perturbation, and for it, there are no known methods of the theory of inverse spectral problems.

Introduce BVP ${\mathcal L}_j(p, q)$ with the number  $j=0,1:$
\begin{equation} \label{eq with 2 froz}
\ell y = \lambda y(x),
\end{equation}
\begin{equation} \label{bc}
y^{(j)}(0) = y(\pi) = 0,
\end{equation}
and denote by $\{ \lambda_{nj}\}_{n \ge 1}$ its spectrum.
Consider the following inverse problem:
 \begin{problem} \label{first ip}
Given the spectra  $\{ \lambda_{n0}\}_{n \ge 1}$ and $\{ \lambda_{n1}\}_{n \ge 1},$ recover  $p$ and $q.$
\end{problem}
\noindent
First of all, we are interested in the uniqueness of the solution of this inverse problem,
i.e. whether different pairs $(p, q)$ always correspond to the different pairs of spectra $(\{ \lambda_{n0}\}_{n \ge 1},$ $\{ \lambda_{n1}\}_{n \ge 1}).$
We note at once that if $a=b,$ then only the sum of $p$ and $q$ matters, and in this case, the solution of Inverse problem~\ref{first ip} is not unique.
To exclude this situation, we impose the condition
\begin{equation} \label{ab}
0< a < b < \pi.
\end{equation}
We obtain that for any $a$ and $b$ satisfying~\eqref{ab} the solution of Inverse problem~\ref{first ip} is not unique.
For the uniqueness of recovering $p$ and $q,$ one should clarify the statement of the inverse problem by specifying additional information.
We assume that the coefficients $p$ and $q$ both equal zero on  $[0, b]$ or $[a, \pi].$
Under this condition, we prove the theorem on uniqueness of recovering $p$ and $q$ by the two spectra.

We also obtain regularized trace formulae for the spectra $\{ \lambda_{nj}\}_{n \ge 1},$ $j=0,1.$
A regularized trace is understood as the series of the differences between the eigenvalues of two operators, one of which is a perturbation of the other.
From a physical point of view, this notion reflects the measure of the energy defect of a quantum system, see~\cite{Lif}.
Basic results from the theory of regularized traces are given in the review~\cite{Sad-Pod}.

Regularized trace formulae for the Sturm--Liouville operators with one frozen argument were considered in the works~\cite{huang, bon-traces,dob-traces,sat-traces} in the case of the absolutely continuous coefficient $p.$
In~\cite{bon-traces}, under the conditions $ p \in W^1_2[0, \pi]$ and $q \equiv 0,$ it was proved that
$$\sum_{n=1}^\infty  \left(\lambda_{nj} - \Big(n - \frac{j}{2}\Big)^2\right) = p(a), \quad j=0,1.$$
We obtain regularized trace formulae under more general conditions
 $p, q \in L_1[0, \pi]:$
\begin{equation} \label{traces}
\sum_{n=1}^\infty \left(\lambda_{nj} - \Big(n - \frac{j}{2}\Big)^2\right)=\sum_{n=1}^\infty s_{nj}, \quad j=0,1,
\end{equation}
where $s_{nj}$ are expressed from the Fourier coefficients of the functions $p$ and $q$ by the system of the eigenfunctions of the unperturbed operator $-y''.$
Formula~\eqref{traces} is understood in such way that either both series diverge or converge to the same number. If $p$ is absolutely continuous at the vicinity of the point $a$ and $q$  is absolutely continuous at the vicinity of the point $b,$ then convergence to the number $p(a) + q(b)$ takes place.
Regularized trace formulae for the operators with two frozen arguments can be also obtained from the results of the works~\cite{Lyub, cop} but stronger restrictions of the coefficients are required.

The paper is organized as follows. In Section~\ref{char}, we construct characteristic functions and obtain asymptotic formulae for the spectra, see Theorem~\ref{th asympt}.
In Section~\ref{non-unique}, we construct the distinct pairs $(p, q)$
that lead to the same pair of the spectra, see Theorem~\ref{th main}.
In Section~\ref{sec ip}, we provide the clarified statement of the inverse problem and prove the uniqueness theorem for it, see Theorem~\ref{uniqueness theorem}.
In Section~\ref{traces sec}, we obtain the regularized trace formulae for the spectra
 $\{ \lambda_{nj}\}_{n \ge 1},$ see Theorem~\ref{th traces}.
In Appendix, we give details of the proof of formula~\eqref{rep} for the characteristic functions.

\section{Characteristic functions} \label{char}
Let us obtain characteristic functions of the BVPs ${\mathcal L}_j(p, q)$ for $j=0,1.$
It is well known that for $f \in L_2(0, \pi),$ any solution of the equation $-y''(x) + f(x)= \lambda y(x)$ can be represented in a form
\begin{equation*}
y(x) = x_1 \frac{\sin \rho x}{\rho} + x_2 \cos \rho x + \int_0^x \frac{\sin \rho(x-t)}{\rho} f(t) \, dt, \quad \rho^2 = \lambda, \quad x_1, \, x_2 \in \mathbb C.
\end{equation*}
Putting $f(t) =  x_3 p(t) + x_4 q(t),$ we get
\begin{equation} \label{solution}
y(x) = x_1 \frac{\sin \rho x}{\rho} + x_2 \cos \rho x + x_3  \int_0^x \frac{\sin \rho(x-t)}{\rho} p(t) \, dt + x_4  \int_0^x \frac{\sin \rho(x-t)}{\rho} q(t) \, dt.
\end{equation}
The latter function is a solution of equation~\eqref{eq with 2 froz} if and only if $y(a)=x_3$ and ${y(b)=x_4}.$
For this solution, to be non-trivial, it is necessary and sufficient that the vector $(x_j)_{j=1}^4$ is non-zero.

Substituting expression~\eqref{solution} into conditions~\eqref{bc} for $j=0,1$  and into equalities $y(a) = x_3,$ $y(b) = x_4,$ we arrive at a system of linear equations
\begin{equation}
\left\{
\begin{array}{c}
x_{2-j} = 0,\\[4mm]
\displaystyle x_1 \frac{\sin \rho \pi}{\rho} + x_2 \cos \rho \pi + x_3 \int_0^\pi \frac{\sin \rho(\pi - t)}{\rho} p(t) \, dt + x_4 \int_0^\pi \frac{\sin \rho(\pi - t)}{\rho} q(t) \, dt = 0, \\[4mm]
\displaystyle x_1 \frac{\sin \rho a}{\rho} + x_2 \cos \rho a  + x_3 \int_0^a \frac{\sin \rho(a - t)}{\rho} p(t) \, dt + x_4 \int_0^a \frac{\sin \rho(a - t)}{\rho} q(t) \, dt = x_3, \\[4mm]
\displaystyle x_1 \frac{\sin \rho b}{\rho} + x_2 \cos \rho b + x_3 \int_0^b \frac{\sin \rho(b - t)}{\rho} p(t) \, dt +
x_4 \int_0^b \frac{\sin \rho(b - t)}{\rho} q(t) \, dt = x_4.
\end{array} \right.
\end{equation}
The system has a non-zero solution $(x_k)_{k=1}^4$ if and only if
$$\Delta_j(\lambda) :=  \left|
\begin{array}{ccc}
\varphi_j(\rho, \pi) & \displaystyle \int_0^\pi \frac{\sin \rho(\pi - t)}{\rho} p(t) \, dt & \displaystyle \int_0^\pi \frac{\sin \rho(\pi - t)}{\rho} q(t) \, dt \\[4mm]
\varphi_j(\rho, a) & \displaystyle\int_0^a \frac{\sin \rho(a - t)}{\rho} p(t) \, dt - 1 & \displaystyle\int_0^a \frac{\sin \rho(a - t)}{\rho} q(t) \, dt \\[4mm]
\varphi_j(\rho, b) & \displaystyle\int_0^b \frac{\sin \rho(b - t)}{\rho} p(t) \, dt & \displaystyle\int_0^b \frac{\sin \rho(b - t)}{\rho} q(t) \, dt -1
\end{array} \right| = 0,
$$
where we designate
$\varphi_j(\rho, z) = \left\{ \begin{array}{cc}
\rho^{-1}\sin \rho z, & j=0, \\
 \cos \rho z, & j=1.
\end{array} \right.
 $
 From now on, we denote the dependence on $p$ and $q$ by a set of arguments after semicolon, for example, $\Delta_j(\lambda)=\Delta_j(\lambda; p, q).$
We may not specify this dependence if there is no emphasis on the specific values of $p$ and $q.$

For $j=0,1,$ the function $\Delta_j(\lambda)$ is a characteristic function of the BVP ${\mathcal L}_j(p, q):$ the zeros of this function coincide with the spectrum of the BVP. Since the Taylor series of the entire functions $\rho^{-1}\sin \rho z$ and $\cos \rho z$ contain $\rho$ only in even powers, the functions $\Delta_0$ and $\Delta_1$ are entire functions of $\lambda.$

Expanding the determinants, after manipulations, we obtain representations
\begin{equation} \label{rep}
\Delta_j(\lambda) =  \varphi_j(\rho, \pi) + A_{j0}(\lambda)+ A_{j1}(\lambda) + B_j(\lambda), \quad  j=0,1,
\end{equation}
where
\begin{equation}
\left.\begin{array}{c}
\displaystyle A_{j0}(\lambda) =  A_{j0}(\lambda; p) = \varphi_j(\rho, a) \int_a^\pi \frac{\sin \rho(\pi - t)}{\rho} p(t) \,dt + \frac{\sin \rho (\pi - a)}{\rho} \int_0^a \varphi_j(\rho, t) \, p(t) \,dt, \\[4mm]
\displaystyle A_{j1}(\lambda) =  A_{j1}(\lambda; q)=\varphi_j(\rho, b) \int_b^\pi \frac{\sin \rho(\pi - t)}{\rho} q(t) \,dt + \frac{\sin \rho (\pi - b)}{\rho} \int_0^b \varphi_j(\rho, t) \,q(t) \,dt,
\end{array} \right\}
\label{def A}
\end{equation}
\begin{multline}
B_j(\lambda) =  B_j(\lambda; p, q) =
\frac{\sin \rho(\pi - b)}{\rho} \Big( \int_0^a \varphi_j(\rho, t) \, p(t) \, dt  \int_a^b \frac{\sin \rho (\xi - a)}{\rho} q(\xi) \, d\xi -\\
\hspace{7cm}- \int_0^a \varphi_j(\rho, \xi) \, q(\xi) \, d\xi  \int_a^b \frac{\sin \rho (t - a)}{\rho} p(t) \, dt\Big) + \\
 + \frac{\sin \rho(b-a)}{\rho} \Big( \int_0^a \varphi_j(\rho, t)\, p(t) \, dt \int_b^\pi \frac{\sin \rho(\pi - \xi)}{\rho} q(\xi)\, d\xi- \hspace{5cm} \\
\hspace{7cm}  -\int_0^a \varphi_j(\rho, \xi)\, q(\xi) \, d\xi \int_b^\pi \frac{\sin \rho(\pi - t)}{\rho} p(t)\, dt\Big) +\\
+ \varphi_j( \rho, a) \Big( \int_b^\pi \frac{\sin \rho(\pi - \xi)}{\rho} q(\xi)\, d\xi \int_a^b \frac{\sin \rho(b-t)}{\rho} p(t)\, dt  - \hspace{5cm}\\
\hspace{7cm}-\int_b^\pi \frac{\sin \rho(\pi - t)}{\rho} p(t)\, dt \int_a^b \frac{\sin \rho(b-\xi)}{\rho} q(\xi)\, d\xi \Big)+ \\
+\varphi_j( \rho, a)\frac{\sin \rho(\pi - b)}{\rho}\int_a^b \int_a^b \frac{\sin \rho(\xi - t)}{\rho} q(\xi) \,p(t) \,dt\,d\xi
\label{Bj}
\end{multline}
(for details, see Appendix).
From formula~\eqref{rep} it is clear that $\Delta_j(\lambda)$ are entire functions of order $1/2$ and type $\pi.$
By the standard method, based on application of Rouche's theorem  (см.~\cite{KuM_yurko}), we have proved the following theorem.

\begin{theorem} \label{th asympt}
For $j=0,1,$ asymptotic formulae
$$ \lambda_{nj} = \Big(n - \frac{j}{2}\Big)^2 + \varkappa_{nj}, \quad \{ \varkappa_{nj}\}_{n \ge 1} \in \ell_2,$$
hold.
\end{theorem}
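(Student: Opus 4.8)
The plan is to extract the asymptotics directly from \eqref{rep}, treating $\varphi_j(\rho,\pi)$ as the principal part and $A_{j0}+A_{j1}+B_j$ as a lower-order perturbation. For $p=q\equiv 0$ one has $\Delta_j(\lambda)=\varphi_j(\rho,\pi)$, whose zeros are $\rho=n$ for $j=0$ (so $\lambda=n^2$) and $\rho=n-\tfrac12$ for $j=1$ (so $\lambda=(n-\tfrac12)^2$); these furnish the main term $(n-\tfrac{j}{2})^2$. It then remains to show that the zeros of $\Delta_j$ are perturbations of those of $\varphi_j(\rho,\pi)$ whose deviations are square-summable after passing to $\lambda=\rho^2$.

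Writing $\tau=|\operatorname{Im}\rho|$, I would first record the elementary bounds $|\varphi_j(\rho,z)|\le Ce^{\tau z}|\rho|^{\,j-1}$ and $\bigl|\int_c^d \rho^{-1}\sin\rho(z-t)f(t)\,dt\bigr|\le C|\rho|^{-1}e^{\tau(z-c)}\|f\|_{L_2}$. Substituting these into \eqref{def A} and \eqref{Bj}, each summand of $A_{j0},A_{j1}$ combines one elementary factor ($\varphi_j$ or $\rho^{-1}\sin$) with one coefficient integral, giving $A_{j0}(\lambda)+A_{j1}(\lambda)=O\bigl(e^{\tau\pi}|\rho|^{\,j-2}\bigr)$, while $B_j$, carrying at least two integral factors, is $O\bigl(e^{\tau\pi}|\rho|^{\,j-3}\bigr)$. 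On the circles $\Gamma_N=\{\rho:|\rho|=N+\tfrac{1-j}{2}\}$ and on fixed small disks around each point $n-\tfrac{j}{2}$ one has the matching lower bound $|\varphi_j(\rho,\pi)|\ge c\,e^{\tau\pi}|\rho|^{\,j-1}$, so the perturbation is smaller than the principal part by a factor $O(|\rho|^{-1})$. Rouch\'e's theorem then shows that $\Delta_j$ has inside $\Gamma_N$ the same number of zeros as $\varphi_j(\rho,\pi)$ and exactly one zero $\rho_{nj}$ in the small disk about $n-\tfrac{j}{2}$; hence $\rho_{nj}=n-\tfrac{j}{2}+o(1)$.

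The decisive step is to sharpen $o(1)$ to an $\ell_2$ bound. Put $\rho_{nj}=n-\tfrac{j}{2}+\delta_{nj}$ and linearize the principal part about its zero: $\varphi_j(\rho_{nj},\pi)=c_{nj}\delta_{nj}+O(\delta_{nj}^2)$ with $|c_{nj}|=\pi\,|n-\tfrac{j}{2}|^{\,j-1}(1+o(1))$ (namely $c_{n1}=(-1)^n\pi+o(1)$ and $c_{n0}=(-1)^n\pi/n+o(1)$). Combining this with $\varphi_j(\rho_{nj},\pi)=-(A_{j0}+A_{j1}+B_j)(\lambda_{nj})$ yields $\delta_{nj}=-c_{nj}^{-1}(A_{j0}+A_{j1}+B_j)(\lambda_{nj})+O(\delta_{nj}^2)$, so that $\{\,n\delta_{nj}\,\}\in\ell_2$ would follow once $\{\,n\,|\rho|^{\,1-j}(A_{j0}+A_{j1}+B_j)(\lambda_{nj})\,\}\in\ell_2$. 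Here the structure of \eqref{def A} is essential: after cancelling the factor $|\rho|^{\,j-2}$ against $n\,|\rho|^{\,1-j}\sim 1$, the surviving quantities are, up to bounded multipliers, the integrals $\int\sin\bigl((n-\tfrac{j}{2})(\pi-t)\bigr)p(t)\,dt$ and $\int\varphi_j(n-\tfrac{j}{2},t)\,p(t)\,dt$ and their $q$-analogues, i.e. Fourier coefficients of the $L_2$-functions $p,q$ with respect to the orthogonal systems $\{\sin\}$, $\{\cos\}$; by Bessel's inequality these sequences lie in $\ell_2$, and bounded multipliers together with finite summation preserve $\ell_2$. The term $B_j$ is $O(|\rho|^{\,j-3})$ and hence contributes an $o(1/n)$ quantity after multiplication by $n$. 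I expect the main obstacle to lie exactly in this Fourier-coefficient bookkeeping: one must justify replacing the true argument $\rho_{nj}$ by the integer point $n-\tfrac{j}{2}$ inside the integrals, estimating the induced error by means of the already-proved bound $\delta_{nj}=o(1)$ and the equicontinuity of the coefficient integrals, and one must check that no cross term in \eqref{def A} or \eqref{Bj} spoils square-summability. Once $\{n\delta_{nj}\}\in\ell_2$ is secured, the relation $\varkappa_{nj}=\rho_{nj}^2-(n-\tfrac{j}{2})^2=\delta_{nj}\bigl(2(n-\tfrac{j}{2})+\delta_{nj}\bigr)$ gives $\{\varkappa_{nj}\}\in\ell_2$ and finishes the proof.
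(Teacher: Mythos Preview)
Your proposal is correct and follows precisely the ``standard method, based on application of Rouch\'e's theorem'' that the paper invokes (citing~\cite{KuM_yurko}) without spelling out the details; in particular, the estimates you record for $A_{j0},A_{j1},B_j$ agree with the paper's later formulae~\eqref{a00}. The only remark is that the paper leaves the $\ell_2$-refinement implicit, while you make the Fourier-coefficient bookkeeping explicit --- this is exactly the content of the standard reference and is carried out correctly in your outline.
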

In what follows, certain properties  of the terms in representation~\eqref{rep}
will play an important role. The terms $A_{j0}(\lambda; p)$ and $A_{j1}(\lambda; q)$ depend linearly on $p$ and $q,$ respectively. The terms $B_j( \lambda; p, q)$ depend bilinearly on $p$ and $q.$ From formula~\eqref{Bj} it is clear that this dependence is antisymmetric: $B_j(\lambda; p, q) = -B_j(\lambda; q, p)$ (pairs of antisymmetric terms are combined into brackets in~\eqref{Bj}). By this reason, if $p=q,$ then $B_j(\lambda; p, q) = 0.$ The bilinearity yields that
\begin{equation} \label{anti}
B_j(\lambda;  q, \alpha q) = 0, \quad \alpha \in \mathbb C.
\end{equation}
From~\eqref{Bj} one can also see that $B_j(\lambda;  p, q) = 0$ whenever
\begin{equation} \label{Ba}
\left[\begin{array}{cc}
p(x) \equiv 0, \; q(x) \equiv 0, & x\in [a, \pi],\\[2mm]
 p(x) \equiv 0, \; q(x) \equiv 0, & x\in [0, b].
 \end{array}\right.
\end{equation}

Construction of the characteristic function of the BVP ${\mathcal L}_0$  was considered in~\cite{preprint} in the particular case $p=q.$ In this case $B_j(\lambda) =  0,$ and our representation~\eqref{rep} agrees with the formula obtained in~\cite{preprint}.
Another case in which $B_j(\lambda) = 0$ is when $q=0.$ The latter condition also yields $A_{j1}(\lambda)=0,$ and formula~\eqref{rep} gives us the representation for the characteristic function of the operator with one frozen argument, which agrees with the one obtained earlier, see~\cite{KuM_BK,KuM_BV,KuM_BBV}.


\section{Non-uniqueness of solution of Inverse problem~\ref{first ip}}
\label{non-unique}
In this section, we construct the distinct pairs of the coefficients
 $(p, q)$ that lead to the same pairs of the spectra $\Big( \{ \lambda_{n0}\}_{n \ge 1}, \{ \lambda_{n1}\}_{n \ge 1}\Big).$ By this way
we prove non-uniqueness of solution of Inverse problem~\ref{first ip}.

We extend the functions $p$ and $q$ on $\mathbb R \setminus (0, \pi)$ by zero.
Proceeding analogously to the proof of Lemma~1 in~\cite{KuM_BK}, we get representations
\begin{equation}\label{Aj}
\left.
\begin{array}{c}
\displaystyle A_0(\lambda; p, q):= A_{00}(\lambda)+ A_{01}(\lambda) =  \frac12\int_0^\pi \frac{\cos\rho t}{\rho^2} W_0(t) dt, \\[4mm]
\displaystyle A_1(\lambda; p, q) :=  A_{10}(\lambda)+A_{11}(\lambda) =  \frac12\int_0^\pi \frac{\sin\rho t}{\rho}W_1(t) dt,
\end{array} \right\}
\end{equation}
where
\begin{multline*}
W_j(t; p, q) = (-1)^{j+1}p(t+a - \pi) + (-1)^{j+1} p(\pi - t + a)  +\\[1mm]
+ (-1)^j p(\pi - a+t)  + p(\pi - a - t) + \\[1mm]
+ (-1)^{j+1}q(t+b - \pi) + (-1)^{j+1} q(\pi - t + b)  +\\[1mm]
+ (-1)^j q(\pi - b+t) + q(\pi - b - t), \quad t \in [0, \pi], \quad j=0,1.
\end{multline*}

\begin{lemma} \label{l A=0}
Put $T =
\min\{ a, b-a, \pi-b\}$
and let $G(t) \in L_2(\mathbb R)$ be an arbitrary even non-trivial function that equals zero outside the segment
$[-T, T].$ Then, for functions
\begin{equation} \label{sr}
s(t) = G(b - t),\quad
r(t) =
-G(a - t)
\end{equation}
we have $A_j(\lambda; s, r) = 0,$ $j=0,1.$
\end{lemma}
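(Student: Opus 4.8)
The plan is to reduce the vanishing of $A_j(\lambda; s, r)$ to an identity for the kernel $W_j$, and then to verify that identity by a direct substitution exploiting the evenness of $G$. By the representations~\eqref{Aj}, each function $A_j(\lambda; s, r)$ is an integral transform of $W_j(\,\cdot\,; s, r)$, taken against $\rho^{-2}\cos\rho t$ for $j=0$ and against $\rho^{-1}\sin\rho t$ for $j=1$. Hence it suffices to prove the stronger pointwise statement that $W_j(t; s, r) = 0$ for a.e. $t \in [0, \pi]$ and $j=0,1$; this immediately yields $A_j(\lambda; s, r) \equiv 0$ by integration, and no further analytic input is required.

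First I would pin down the supports. Since $G$ vanishes outside $[-T, T]$, the function $s = G(b - \cdot)$ is supported in $[b - T, b + T]$ and $r = -G(a - \cdot)$ in $[a - T, a + T]$. The choice $T = \min\{a, b - a, \pi - b\}$ ensures $a - T \ge 0$, $a + T \le b - T$, and $b + T \le \pi$, so both supports lie inside $(0, \pi)$. This is the decisive structural point: because the supports do not reach the endpoints, the zero-extensions of $s$ and $r$ to all of $\mathbb R$ coincide with the closed-form expressions $G(b - \cdot)$ and $-G(a - \cdot)$ on the whole line. I may therefore substitute these expressions into every shifted argument occurring in $W_j$, including those that fall outside $(0,\pi)$, without worrying about the truncation implied by the zero-extension.

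The heart of the argument is then a pairing of the eight summands defining $W_j(t; s, r)$ into four pairs, each consisting of one $p$-summand (carrying $s$) and one $q$-summand (carrying $r$) sharing the same sign factor $(-1)^{j+1}$, $(-1)^j$, or $1$. Writing $s(\cdots) = G(b - \cdots)$ and $r(\cdots) = -G(a - \cdots)$ and simplifying, the two $G$-arguments in each pair turn out to be either equal or opposite; since $G$ is even, in both cases the values of $G$ coincide, while the minus sign carried by $r$ cancels the contribution of $s$. Concretely, the term in $s(t + a - \pi)$ cancels the term in $r(\pi - t + b)$, the term in $s(\pi - t + a)$ cancels the term in $r(t + b - \pi)$, the term in $s(\pi - a + t)$ cancels the term in $r(\pi - b + t)$, and the term in $s(\pi - a - t)$ cancels the term in $r(\pi - b - t)$. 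Each cancellation is an identity in $t$ on all of $\mathbb R$, so $W_j(\,\cdot\,; s, r) \equiv 0$ for both $j$, and the lemma follows.

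The main obstacle I anticipate is purely one of bookkeeping: correctly tracking the four shifts $t+a-\pi,\ \pi-t+a,\ \pi-a\pm t$ (and their $b$-analogues) together with the parity factors, so that the pairing is exact. The roles of the three constraints in $T$ should be identified precisely — $T \le a$ and $T \le \pi - b$ keep the supports inside $(0,\pi)$ and thereby legitimize the global substitution, whereas $T \le b - a$ is what makes the reflected arguments of $s$ and $r$ align so that the $s$/$r$ contributions annihilate one another. The essential analytic ingredient is minimal, namely the evenness of $G$, which is exactly what bridges the pairs whose $G$-arguments are opposite rather than equal.
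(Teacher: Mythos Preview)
Your proof is correct and follows essentially the same route as the paper: both arguments reduce to showing $W_j(\,\cdot\,; s, r)\equiv 0$ by pairing each $s$-term with an $r$-term whose $G$-argument is equal or opposite (the paper merely packages the same four cancellations via the auxiliary combinations $u_0=(W_0+W_1)/2$ and $u_1=(W_0-W_1)/2$, which it reuses later in Theorem~\ref{uniqueness theorem}). Two small slips that do not affect the argument: the inequality $a+T\le b-T$ does not follow from $T\le b-a$ (you only get $a+T\le b$, which already suffices for both supports to lie in $(0,\pi)$), and the constraint $T\le b-a$ in fact plays no role in the cancellations---the four pairing identities are purely algebraic and hold for all $t$ once you may substitute $s=G(b-\cdot)$, $r=-G(a-\cdot)$ globally.
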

\begin{proof}
For $t \in [0, \pi],$ introduce functions
\begin{equation} \label{u0}
\left.\begin{array}{c}
 \displaystyle
 u_0(t; p, q) := \frac{W_0(t) + W_1(t)}{2} = p(\pi - a -t) + q(\pi - b -t), \\[3mm]
 \displaystyle
 u_1(t; p, q) := \frac{W_0(t) - W_1(t)}{2} = -p(t+a - \pi) - p(\pi - t + a)
 + p(\pi - a+t) - \\[3mm]
   \hspace{7cm}
  -q(t+b - \pi)    - q(\pi - t + b)  + q(\pi - b+t).
\end{array} \right\}
\end{equation}
Note that $s$ and $r$ equal $0$ outside $[0, \pi],$
so in~\eqref{u0} we can replace $p$ with $s$ and $q$ with $r.$
Let us prove that $u_0(t; s, r) = u_1(t; s, r)=0.$

Indeed, $$u_0(t; s, r) = s(\pi - a -t) + r(\pi - b -t)\overset{\eqref{sr}}{=}
G(b-\pi + a+t) - G(a - \pi + b + t) = 0.$$
In $u_1(t; s, r)$ we group the terms as follows:
\begin{multline*}u_1(t; s, r) =  -\big(s(t+a - \pi)  + r(\pi - t + b) \big) -\\ - \big(s(\pi - t + a)  +r(t+b - \pi) \big) +  \big(s(\pi - a+t) + r(\pi - b+t) \big). \end{multline*}
Applying~\eqref{sr} to each pair of terms and using the evenness of $G(t),$ we arrive at the equality $u_1(t; s, r) = 0.$

From $u_0(t; s,r) = u_1(t; s, r) = 0$ it follows that $W_0(t; s, r) = 0$ and $W_1(t; s, r) = 0.$ By~\eqref{Aj}, we obtain $A_j(\lambda; s,r) =0$ for $j=0,1.$
\end{proof}

\begin{theorem}\label{th main}
The distinct pairs of the coefficients $(p, q) = (-r, s+r)$ и $(p, q) = (-s-r, s)$ lead to the same pair of spectra $\Big( \{ \lambda_{n0}\}_{n \ge 1}, \{ \lambda_{n1}\}_{n \ge 1}\Big).$ Thus, Inverse problem~\ref{first ip} has no unique solution.
\end{theorem}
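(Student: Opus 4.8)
The plan is to show that the two pairs produce identical characteristic functions $\Delta_j(\lambda)$ for $j = 0, 1$; since the spectrum of ${\mathcal L}_j(p,q)$ is precisely the zero set of $\Delta_j(\lambda; p, q)$, coinciding characteristic functions immediately force coinciding spectra. Thus it suffices to verify
$$\Delta_j(\lambda; -r, s+r) = \Delta_j(\lambda; -s-r, s), \quad j = 0, 1.$$
I would exploit the decomposition~\eqref{rep}, namely $\Delta_j = \varphi_j(\rho, \pi) + A_j(\lambda; p, q) + B_j(\lambda; p, q)$ with $A_j := A_{j0} + A_{j1}$, and treat the linear part $A_j$ and the bilinear part $B_j$ separately. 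The leading term $\varphi_j(\rho, \pi)$ does not depend on $(p, q)$ and drops out of the comparison.

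For the linear part, I would use that $A_{j0}$ is linear in $p$ and $A_{j1}$ is linear in $q$. Expanding both sides by linearity, the difference $A_j(\lambda; -r, s+r) - A_j(\lambda; -s-r, s)$ collapses to $A_{j0}(\lambda; s) + A_{j1}(\lambda; r)$, which is exactly $A_j(\lambda; s, r)$. By Lemma~\ref{l A=0}, the functions $s$ and $r$ defined in~\eqref{sr} satisfy $A_j(\lambda; s, r) = 0$, so the linear parts of the two sides coincide.

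For the bilinear part, I would invoke bilinearity together with the antisymmetry and the vanishing relation~\eqref{anti}. Expanding $B_j(\lambda; -r, s+r)$ in its second argument gives $-B_j(\lambda; r, s) + B_j(\lambda; -r, r)$, and the second summand vanishes by~\eqref{anti} (taking $\alpha = -1$). Likewise, expanding $B_j(\lambda; -s-r, s)$ in its first argument gives $B_j(\lambda; -s, s) - B_j(\lambda; r, s)$, whose first summand again vanishes by~\eqref{anti}. Hence both bilinear parts reduce to the single term $-B_j(\lambda; r, s)$ and therefore agree. Combining the two cancellations yields the desired equality of characteristic functions.

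Finally, I would check that the two pairs are genuinely distinct: the equality $(-r, s+r) = (-s-r, s)$ would require $s \equiv 0$ and $r \equiv 0$, contradicting the non-triviality of $G$, and hence of $s$ and $r$, assumed in Lemma~\ref{l A=0}. The construction therefore produces distinct coefficient pairs with identical spectra, establishing non-uniqueness. I do not anticipate a genuine obstacle here: the whole argument is bookkeeping driven by three structural facts already in hand (linearity of $A_j$, the vanishing $A_j(\lambda; s, r) = 0$ from Lemma~\ref{l A=0}, and the antisymmetry together with~\eqref{anti} for $B_j$). The only point demanding real care is the correct tracking of signs when expanding the bilinear terms in their respective arguments.
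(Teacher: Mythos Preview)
Your proposal is correct and follows essentially the same route as the paper: both arguments compare characteristic functions via the decomposition~\eqref{rep}, reduce the linear part to $A_j(\lambda; s, r)=0$ by Lemma~\ref{l A=0}, and kill the bilinear cross-terms using~\eqref{anti}; the only cosmetic difference is that the paper writes the common bilinear value as $B_j(\lambda;-r,s)$ while you normalize it to $-B_j(\lambda;r,s)$. Your distinctness check is also the paper's.
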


\begin{proof} Since $G$ in Lemma~\ref{l A=0} is a non-trivial function, $s$ and $r$ are also non-trivial functions, and $(-r, s+r)\ne(-s-r, s).$
Let $j=0,1.$ By the linearity of $A_{j0}(\lambda; p)$ and $A_{j1}(\lambda; q)$ with respect to $p$ and $q,$ we have
\begin{multline*}
A_j(\lambda;-r, s+r) - A_j(\lambda;-s-r, s)
=A_{j0}(\lambda;-r)+ A_{j1}(\lambda; s+r)-  \\-A_{j0}(\lambda;-s-r) -A_{j1}(\lambda;s)
=A_{j0}(\lambda; s) + A_{j1}(\lambda; r)= A_j(\lambda; s, r)=0,\end{multline*}
where the last equality holds due to Lemma~\ref{l A=0}. Thus, $A_j(\lambda;-r, s+r) = A_j(\lambda;-s-r, s).$

Using property~\eqref{anti} and the bilinearity of $B_j(\lambda; p, q)$ with respect to $p$ and $q,$ we obtain
$$B_j(\lambda;-r,s+r) = B_j(\lambda;-r,s) = B_j(\lambda;-s-r, s).$$
By formula~\eqref{rep}, we have $\Delta_j(\lambda;-r, s+r) = \Delta_j(\lambda;-s-r, s).$ This means that the pairs $(p, q) = (-r, s+r)$ and $(p, q) = (-s-r, s)$ give the same spectrum $\{ \lambda_{nj}\}_{n \ge 1}.$
\end{proof}
Choosing a specific function $G$ satisfying the conditions of Lemma~\ref{l A=0}, we obtain certain pairs of the coefficients $(p, q)$ in Theorem~\ref{th main}.
\begin{example}
Let $a=\frac\pi4$ and $b=\frac\pi2.$ Then, the function $$G(t) = \chi_{[-T, T]}(t), \quad T = \frac\pi4, \quad \chi_S(t) :=
\left\{ \begin{array}{cc}
1, & t \in S, \\
0, & t \notin S,
\end{array}\right.$$
satisfies the conditions of Lemma~\ref{l A=0}.
We get $s(t) = \chi_{[\frac\pi4;\frac{3\pi}{4}]}(t)$ and $ r(t) =-\chi_{[0;\frac{\pi}{2}]}(t).$
By Theorem~\ref{th main}, the following pairs of the coefficients $(p, q)$ lead to the same pair of the spectra:
$$ p(t) =  \chi_{[0;\frac{\pi}{2}]}(t) , \quad q(t) = \chi_{[\frac\pi4;\frac{3\pi}{4}]}(t) - \chi_{[0;\frac{\pi}{2}]}(t) ;$$
$$ p(t) = - \chi_{[\frac\pi4;\frac{3\pi}{4}]}(t)+\chi_{[0;\frac{\pi}{2}]}(t), \quad q(t) = \chi_{[\frac\pi4;\frac{3\pi}{4}]}(t).$$
Direct substitution of each pair $(p, q)$ into formulae~\eqref{rep}--\eqref{Bj} confirms that the characteristic functions coincide:
\begin{equation*} \begin{array}{c}
\displaystyle
\Delta_0(\lambda; p, q) = \frac{1}{\rho} \sin \rho \pi + \frac{1}{2\rho^3}\Big( -3 \sin \rho\pi + 5\sin \frac{3\rho \pi}{4}  - 2 \sin \frac{\rho\pi}{2}+  \sin \frac{\rho\pi}{4}\Big) + \quad \quad   \quad \quad \quad \quad \\[3mm]
\displaystyle \quad \quad \quad \quad \quad \quad \quad \quad
+ \frac{1}{\rho^5} \Bigg( \sin \frac{\rho \pi}{2} \Big[\cos \frac{\rho \pi}{4} - 1\Big]^2 + 2 \sin \frac{\rho \pi}{4} \Big[\cos \frac{\rho \pi}{4} - 1\Big] \Big[\cos \frac{\rho \pi}{2} - \cos \frac{\rho \pi}{4}\Big] \Bigg),
\\[4mm]
\displaystyle\Delta_1(\lambda; p, q) = \cos \rho \pi +\frac{1}{2\rho^2}\Big( - 3\cos \rho\pi + 3 \cos \frac{3\rho \pi}{4} + \cos \frac{\rho\pi}{4} - 1\Big) + \quad \quad   \quad \quad \quad \quad \quad \quad\quad \quad\\[3mm]
\displaystyle  \quad \quad\quad \quad+ \frac{1}{\rho^4} \Bigg( \sin \frac{\rho\pi}{2} \sin \frac{\rho\pi}{4} \Big[1 - \cos  \frac{\rho\pi}{4}\Big] + \sin^2 \frac{\rho\pi}{4} \Big[\cos \frac{\rho\pi}{4} - \cos \frac{\rho\pi}{2}\Big] +\\[3mm]
\displaystyle \quad \quad   \quad \quad \quad \quad \quad \quad\quad \quad
\quad \quad   \quad \quad\quad \quad   \quad \quad\quad \quad   \quad \quad
+\cos \frac{\rho\pi}{4} \Big[\cos \frac{\rho\pi}{4} - \cos \frac{\rho\pi}{2}\Big] \Big[1 - \cos  \frac{\rho\pi}{4}\Big]\Bigg).
\end{array}
\end{equation*}
\end{example}

\section{Inverse problem with additional conditions}
\label{sec ip}

We consider Inverse problem~\ref{first ip} under additional conditions on $p$ and $q:$
\begin{problem} \label{second ip}
It is known that the pair of the coefficients $(p, q)$ satisfies conditions~\eqref{Ba}. Given spectra $\{ \lambda_{n0}\}_{n \ge 1}$ and $\{ \lambda_{n1}\}_{n \ge 1},$ recover $p$ and $q.$
\end{problem}

Now, we prove a theorem on the uniqueness of solution of Inverse problem~\ref{second ip}.
In addition to the BVPs ${\mathcal L}_0(p, q)$ and ${\mathcal L}_1(p, q),$ we consider   the BVPs ${\mathcal L}_0(\tilde p, \tilde q)$ and ${\mathcal L}_1(\tilde p, \tilde q)$ with other coefficients $\tilde p, \tilde q \in L_2(0, \pi).$
For $j=0, 1,$ denote by $\{\tilde\lambda_{nj} \}_{n \ge 1}$ the spectrum of the BVP ${\mathcal L}_j(\tilde p, \tilde q).$

\begin{theorem} \label{uniqueness theorem}
Let the functions $p,$ $q,$ $\tilde p,$ and $\tilde q$ satisfy one of the following two conditions:
\begin{enumerate}
\item Each function is zero on $[a, \pi];$
\item Each function is zero on $[0, b].$
\end{enumerate}
Then, the equalities $\{ \lambda_{n0} \}_{n \ge 1}=\{ \tilde\lambda_{n0} \}_{n \ge 1}$ and $\{ \lambda_{n1} \}_{n \ge 1}=\{ \tilde\lambda_{n1} \}_{n \ge 1}$ yield $p = \tilde p$ and $q = \tilde q.$
\end{theorem}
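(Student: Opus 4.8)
The plan is to show that equal spectra force the difference functions $P := p - \tilde p$ and $Q := q - \tilde q$ to vanish, the key leverage being that condition~\eqref{Ba} annihilates the bilinear terms $B_j$ for both pairs.

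First I would reduce the spectral data to the characteristic functions. Each $\Delta_j(\lambda)$ is entire of order $1/2$ and type $\pi$ with leading term $\varphi_j(\rho,\pi)$ that does not depend on $(p,q)$; hence by the standard Hadamard-factorization argument two such functions sharing all their zeros must coincide, so $\{\lambda_{nj}\} = \{\tilde\lambda_{nj}\}$ for $j=0,1$ gives $\Delta_j(\lambda; p, q) = \Delta_j(\lambda; \tilde p, \tilde q)$. Under either hypothesis of the theorem, condition~\eqref{Ba} holds for both pairs, so $B_j(\lambda; p, q) = B_j(\lambda; \tilde p, \tilde q) = 0$, and representation~\eqref{rep} collapses to $\Delta_j = \varphi_j(\rho,\pi) + A_{j0} + A_{j1}$. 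Subtracting and using the linearity of $A_{j0}$ in $p$ and of $A_{j1}$ in $q$, I obtain $A_{j0}(\lambda; P) + A_{j1}(\lambda; Q) = 0$, i.e. $A_j(\lambda; P, Q) = 0$ for $j=0,1$.

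Next I would invoke representation~\eqref{Aj}: the identities $A_0 = \tfrac12 \int_0^\pi \rho^{-2}\cos(\rho t)\, W_0(t)\,dt = 0$ and $A_1 = \tfrac12\int_0^\pi \rho^{-1}\sin(\rho t)\, W_1(t)\,dt = 0$ hold for all $\lambda$. Cancelling the powers of $\rho$ and using the injectivity of the Fourier cosine and sine transforms, this yields $W_0(t; P, Q) = W_1(t; P, Q) = 0$ almost everywhere, which by~\eqref{u0} is equivalent to the two functional equations $u_0(t; P, Q) = u_1(t; P, Q) = 0$, $t \in [0,\pi]$.

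The heart of the proof is the analysis of these functional equations under the support restriction (in Case~1, both $P$ and $Q$ are supported in $[0,a]$). Writing $u_0 = 0$ in the variable $u = \pi - a - t$ gives the exact shift relation $P(u) + Q(u - (b-a)) = 0$, while $u_1 = 0$, after discarding the terms whose arguments automatically leave $[0,a]$, gives $P(u) + Q(u + (b-a)) = 0$. Together these force $Q(u - (b-a)) = Q(u + (b-a))$, and, tracking supports, one application shows that $\operatorname{supp}Q \subseteq [l,r]$ implies $\operatorname{supp}P \subseteq [l + (b-a), r - (b-a)]$ and symmetrically; since the support lies in the bounded interval $[0,a]$, after finitely many steps it is empty, whence $P \equiv Q \equiv 0$. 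I expect the main obstacle to be precisely the bookkeeping in this last step: the reflected terms $P(2\pi - 2a + u)$ and $Q(2\pi - a - b + u)$ occurring in $u_1$ do \emph{not} vanish when $a \ge \pi/2$ or $a + b \ge \pi$, and these boundary configurations require a separate, more careful accounting of which shifted copies overlap. Finally, Case~2 (all functions zero on $[0,b]$) I would reduce to Case~1 by the reflection $x \mapsto \pi - x$, which maps the problem to an analogous one with frozen arguments $\pi - b < \pi - a$ and transforms the support $[b,\pi]$ into $[0,\pi-b]$.
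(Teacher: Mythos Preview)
Your plan coincides with the paper's proof: equal spectra $\Rightarrow$ equal characteristic functions (the Hadamard-factorization step is exactly Lemma~\ref{char lemma}) $\Rightarrow$ $B_j\equiv 0$ under~\eqref{Ba}, hence $A_j(\lambda;P,Q)=0$ $\Rightarrow$ $W_0=W_1=0$ via~\eqref{Aj} $\Rightarrow$ the functional equations $u_0=u_1=0$ for the differences, followed by an induction with step $b-a$ that forces $P=Q=0$. The ``shrinking support'' picture you sketch is just the dual of the paper's ``expanding zero set'' induction.

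Two points where the paper's execution is smoother than yours. First, the paper treats Case~2 (support in $[b,\pi]$) as the primary case and restricts $t$ to the intervals $[0,a]$ and $[b,\pi]$ in the equation $u_1=0$; one checks that on these two intervals \emph{every} extraneous shift in $u_1$ lands either below $b$ or above $\pi$, for all admissible $a<b$, so one obtains the clean pair
\[
\hat p(z)+\hat q(z+a-b)=0 \ \text{on } [0,\pi],\qquad \hat p(z+a-b)+\hat q(z)=0 \ \text{on } [b,\pi],
\]
without any sub-cases. Your choice of Case~1 is the one where the ``reflected'' terms genuinely survive for large $a$, so the obstacle you anticipate is real but entirely self-inflicted; switching to Case~2 removes it.

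Second, your reduction of Case~2 to Case~1 by $x\mapsto\pi-x$ does not work as stated. The reflection sends the boundary condition $y'(0)=y(\pi)=0$ to $y(0)=y'(\pi)=0$, which is a different BVP; equivalently, $W_1$ (and hence $u_1$) is \emph{not} invariant under this reflection, so the Case~2 functional equations are not the images of your Case~1 equations. The paper simply says the remaining case is ``considered analogously'' --- meaning the same bookkeeping with different $t$-windows, not a symmetry argument.
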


We need the following lemma.

\begin{lemma} \label{char lemma}
The characteristic functions are uniquely recovered by the spectra:
$$\Delta_j(\lambda) = \pi^{1-j}\prod_{k=1}^\infty \frac{\lambda_{nj} - \lambda}{(n - \frac{j}{2})^2}, \quad j=0,1.$$
\end{lemma}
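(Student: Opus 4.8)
The plan is to read the asserted identity as a Hadamard-type factorization of $\Delta_j$ over its zeros, and then to pin down the multiplicative constant by comparing growth along the negative real axis. First I would record the product expansions of the unperturbed characteristic functions: setting $p=q\equiv 0$ in~\eqref{rep} gives $\Delta_0(\lambda;0,0)=\rho^{-1}\sin\rho\pi$ and $\Delta_1(\lambda;0,0)=\cos\rho\pi$, whose zeros are $n^2$ and $(n-\tfrac12)^2$, and which satisfy the classical expansions
\[
\frac{\sin\rho\pi}{\rho}=\pi\prod_{n=1}^\infty\frac{n^2-\lambda}{n^2},\qquad \cos\rho\pi=\prod_{n=1}^\infty\frac{(n-\frac12)^2-\lambda}{(n-\frac12)^2}.
\]
This already exhibits the origin of the prefactor $\pi^{1-j}$ in the claimed formula.

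Next I would verify that the candidate product $P_j(\lambda):=\pi^{1-j}\prod_{n=1}^\infty\frac{\lambda_{nj}-\lambda}{(n-j/2)^2}$ converges. Writing its $n$-th factor as $1+\frac{\varkappa_{nj}-\lambda}{(n-j/2)^2}$ and using $\{\varkappa_{nj}\}\in\ell_2$ from Theorem~\ref{th asympt} together with $\sum_n (n-j/2)^{-4}<\infty$, the Cauchy--Schwarz inequality shows that $\sum_n\frac{|\varkappa_{nj}-\lambda|}{(n-j/2)^2}$ converges locally uniformly in $\lambda$. Hence $P_j$ is entire of order at most $1/2$ and its zero set is exactly the multiset $\{\lambda_{nj}\}_{n\ge1}$.

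Since $\Delta_j$ is entire of order $1/2$ (as noted after~\eqref{rep}), it has genus $0$, so Hadamard's factorization theorem represents it as a constant times the canonical product over its zeros; by construction these zeros coincide with the spectrum $\{\lambda_{nj}\}_{n\ge1}$. The same representation holds for $P_j$, and the two canonical products are the same function of $\lambda$ (they are determined by the common zero multiset). Consequently $\Delta_j/P_j$ is a constant $C_j$, and the only remaining task is to prove $C_j=1$. (Possible zeros at the origin and any finitely many non-asymptotic eigenvalues cause no trouble here, since the products are taken with multiplicities and convergence is governed solely by the tail where $\lambda_{nj}\approx(n-j/2)^2$.)

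The main obstacle is exactly this evaluation of $C_j$, and it is where the normalization enters. I would let $\lambda=-t\to-\infty$, i.e. $\rho=i\tau$ with $\tau\to+\infty$. On one hand, the terms $A_{j0},A_{j1},B_j$ in~\eqref{rep} each carry extra negative powers of $\rho$ relative to $\varphi_j(\rho,\pi)$, so $\Delta_j(-t)\sim\varphi_j(i\tau,\pi)$, which equals $\sinh(\tau\pi)/\tau$ for $j=0$ and $\cosh(\tau\pi)$ for $j=1$. On the other hand, comparing $P_j$ with the unperturbed product of the first paragraph gives
\[
\frac{P_j(-t)}{\varphi_j(i\tau,\pi)}=\prod_{n=1}^\infty\frac{\lambda_{nj}+t}{(n-j/2)^2+t}=\prod_{n=1}^\infty\Big(1+\frac{\varkappa_{nj}}{(n-j/2)^2+t}\Big)\longrightarrow 1,
\]
the limit following because $\sum_n\log\big(1+\tfrac{\varkappa_{nj}}{(n-j/2)^2+t}\big)\to0$ as $t\to\infty$, which I would bound by Cauchy--Schwarz against $\big(\sum_n((n-j/2)^2+t)^{-2}\big)^{1/2}\to0$ (the quadratic remainder in the logarithm being controlled likewise). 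Combining the two asymptotics yields $\Delta_j(-t)/P_j(-t)\to1$, hence $C_j=1$ and the stated formula. Since its right-hand side is built only from $\{\lambda_{nj}\}_{n\ge1}$, the characteristic function is thereby uniquely determined by the spectrum.
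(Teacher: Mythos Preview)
Your argument is correct and is precisely the standard route the paper points to (Hadamard factorization for an entire function of order $1/2$, comparison with the unperturbed products $\rho^{-1}\sin\rho\pi$ and $\cos\rho\pi$, and determination of the constant via the growth asymptotics $\Delta_j(\lambda)=\varphi_j(\rho,\pi)+O(e^{|\tau|\pi}/\rho^{\,2-j})$ coming from~\eqref{rep}--\eqref{Bj} together with Theorem~\ref{th asympt}). Your treatment of the normalizing constant by letting $\lambda\to-\infty$ and bounding $\sum_n\varkappa_{nj}/((n-j/2)^2+t)$ via Cauchy--Schwarz is exactly the computation underlying the cited Theorem~1.1.4 in~\cite{KuM_yurko}.
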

The proof of the lemma is standard, see the proof of Theorem~1.1.4 in~\cite{KuM_yurko}. In the proof, we applied the asymptotic formulae from Theorem~\ref{th asympt} and the formulae
$$\Delta_0(\lambda) = \frac{\sin \rho \pi}{\rho} + O\Big(\frac{e^{|\tau|\pi}}{\rho^2}\Big), \quad \Delta_1(\lambda) = \cos \rho \pi + O\Big(\frac{e^{|\tau|\pi}}{\rho}\Big), \quad \tau = \mathrm{Im}\, \rho,$$
which follow from~\eqref{rep}--\eqref{Bj}.
\begin{proof}[Proof of Theorem~\ref{uniqueness theorem}.]
By virtue of Lemma~\ref{char lemma}, we have that $\Delta_j(\lambda; p, q) = \Delta_j(\lambda; \tilde p, \tilde q),$ $j=0,1.$ Since $B_j(\lambda; p, q) = B_j(\lambda; \tilde p, \tilde q) =0,$ in representation~\eqref{rep}, we get $A_j(\lambda; p, q) = A_j(\lambda; \tilde p, \tilde q),$ and in~\eqref{Aj}, we obtain $W_j(t; p, q) = W_j(t; \tilde p, \tilde q),$ $j=0,1.$
This yields
\begin{equation} \label{prev eq}
u_j(t; p, q) = u_j(t; \tilde p, \tilde q), \quad j=0,1.
\end{equation}
Denote $\hat p = p - \tilde p$ and $\hat q = q - \tilde q.$ From~\eqref{u0} and~\eqref{prev eq}, it follows that
\begin{equation} \label{u01}
\left.\begin{split}
&\hat p(\pi - a -t) + \hat q(\pi - b -t) = 0, \quad
\\[2mm]
- \hat p(t+a - \pi) -& \hat p(\pi - t + a)
 + \hat p(\pi - a+t)  -  \\[2mm]
  -&\hat q(t+b - \pi)  - \hat q(\pi - t + b)  + \hat q(\pi - b+t)=0, \quad
\end{split} \right\}
\end{equation}
where $t \in [0, \pi].$
Considering the first equality in~\eqref{u01} for $t \in [0, \pi - a],$ after the substitution  $z = \pi - a - t,$ we obtain
\begin{equation}\label{eq pq}
\hat p(z) + \hat q(z + a - b) = 0, \quad z \in [0, \pi - a].
\end{equation}
For definiteness, we assume that $p,$ $q,$ $\tilde p,$ and $\tilde q$ are zero on $[0, b].$
Considering the second equality in~\eqref{u01} for $t \in [0, a],$  taking into account that $\hat p = \hat q = 0$ on $[0, b],$ we get $\hat p(\pi - a+t) + \hat q(\pi - b +t)=0.$ The substitution $z = \pi - a - t$ yields equality~\eqref{eq pq} for $z \in [\pi - a, \pi].$ Thus, we arrive at the formula
\begin{equation} \label{pq1}
\hat p(z) + \hat q(z + a - b) = 0, \quad z \in [0, \pi].
\end{equation}
Analogously, the second equality in~\eqref{u01} for $t \in [b, \pi]$ yields $\hat p(\pi - t + a) + \hat q(\pi -t +b) = 0,$ and, after a substitution, we obtain
\begin{equation}\label{pq2}
\hat p(z + a - b) + \hat q(z) = 0, \quad z \in [b ,\pi].
\end{equation}
Since $\hat p = \hat q = 0$ on $[0, b],$ considering $b \le z \le \min(2b - a, \pi)$ in formulae~\eqref{pq1} and~\eqref{pq2}, we arrive at the equalities $\hat p(z) = \hat q(z) = 0.$ Repeating these arguments, by induction, we prove that
$$\hat p = \hat q = 0, \quad b + (k-1)(b-a) \le z \le \min(b + k(b-a), \pi), \quad k =1, \ldots, n,$$
where $n \in \mathbb N$ is such smallest number that $b + n(b - a) \ge \pi.$ These equalities mean that $\hat p = \hat q = 0$ on $[b, \pi],$ and the statement of the theorem is proved. The case when $p,$ $q,$ $\tilde p,$ and $\tilde q$ equal zero on $[a, \pi]$ is considered analogously.
\end{proof}

\section{Regularized trace formulae} \label{traces sec}

For $n \ge 1,$  put
$$a_{n0} =  \frac2\pi \sin na \int_0^\pi  \sin nt \, p(t) \, dt, \quad a_{n1} =  \frac2\pi \cos \Big(n-\frac12\Big)a \int_0^\pi  \cos \Big(n-\frac12\Big)t \, p(t) \, dt,$$
$$b_{n0} = \frac2\pi \sin nb \int_0^\pi  \sin nt \, q(t) \, dt, \quad b_{n1} = \frac2\pi \cos \Big(n-\frac12\Big)b \int_0^\pi  \cos \Big(n-\frac12\Big)t \, q(t) \, dt$$
and introduce numbers $s_{nj} = a_{nj} + b_{nj},$ $j=0,1.$

\begin{theorem} \label{th traces}
Let $p, q \in L_1[0, \pi]$ and $j=0,1.$ The series $\sum_{n=1}^\infty \left(\lambda_{nj} - \Big(n - \frac{j}{2}\Big)^2\right)$ converges if and only if the series $\sum_{n=1}^\infty s_{nj}$ converges. In the case of convergence, formula~\eqref{traces} holds.
\end{theorem}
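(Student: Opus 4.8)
The plan is to show that the two partial sums $\sum_{n=1}^N\big(\lambda_{nj}-(n-\tfrac j2)^2\big)$ and $\sum_{n=1}^N s_{nj}$ differ by a quantity tending to $0$ as $N\to\infty$; once this is established, the two series converge or diverge simultaneously, and in the convergent case their limits coincide, which is exactly the assertion of the theorem together with~\eqref{traces}. Write $\mu_{nj}=(n-\tfrac j2)^2$; these are precisely the simple zeros of the leading term $\varphi_j(\rho,\pi)$ in~\eqref{rep}, and by Theorem~\ref{th asympt} the perturbed zeros satisfy $\lambda_{nj}=\mu_{nj}+\varkappa_{nj}$ with $\{\varkappa_{nj}\}\in\ell_2$. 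I would integrate over the circles $\Gamma_N=\{|\lambda|=r_N^2\}$ with $r_N=N+\tfrac12-\tfrac j2$, chosen so that the circle $|\rho|=r_N$ stays away from the zeros of $\varphi_j(\rho,\pi)$ and, for $N$ large, so that $\Gamma_N$ encloses exactly the first $N$ points of each family $\{\mu_{nj}\}$ and $\{\lambda_{nj}\}$; this separation is guaranteed by Theorem~\ref{th asympt} and the Rouché argument underlying it.

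Applying the argument principle with the weight $\lambda$ and integrating by parts gives the representation
\begin{equation*}
\sum_{n=1}^N \varkappa_{nj}=-\frac{1}{2\pi i}\oint_{\Gamma_N}\log\frac{\Delta_j(\lambda)}{\varphi_j(\rho,\pi)}\,d\lambda ;
\end{equation*}
the boundary term vanishes because $\Delta_j/\varphi_j(\rho,\pi)\to1$ at infinity and $\Delta_j$ and $\varphi_j(\rho,\pi)$ enclose equally many zeros, so $\log\big(\Delta_j/\varphi_j(\rho,\pi)\big)$ is single-valued on $\Gamma_N$. In parallel, a direct computation — changing variables in the representations~\eqref{Aj} and using elementary trigonometric identities, together with the value of $\tfrac{d}{d\lambda}\varphi_j(\rho,\pi)$ at $\mu_{nj}$ — shows that $A_j(\lambda)/\varphi_j(\rho,\pi)$ has a simple pole at each $\mu_{nj}$ with residue $-s_{nj}$ and no other singularities, whence
\begin{equation*}
\sum_{n=1}^N s_{nj}=-\frac{1}{2\pi i}\oint_{\Gamma_N}\frac{A_j(\lambda)}{\varphi_j(\rho,\pi)}\,d\lambda .
\end{equation*}
This residue identity is the step that makes the coefficients $s_{nj}$ of~\eqref{traces} emerge from the linear part $A_j=A_{j0}+A_{j1}$ of~\eqref{rep}.

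Subtracting the two representations, the whole problem reduces to proving that
\begin{equation*}
\oint_{\Gamma_N}\left(\log\Big(1+\frac{A_j+B_j}{\varphi_j(\rho,\pi)}\Big)-\frac{A_j}{\varphi_j(\rho,\pi)}\right)d\lambda\longrightarrow 0,\qquad N\to\infty .
\end{equation*}
Expanding the logarithm, the integrand equals $\dfrac{B_j}{\varphi_j(\rho,\pi)}-\dfrac12\Big(\dfrac{A_j+B_j}{\varphi_j(\rho,\pi)}\Big)^2+\cdots$. On $\Gamma_N$ one has $|\varphi_j(\rho,\pi)|\gtrsim e^{|\tau|\pi}|\rho|^{\,j-1}$, while from~\eqref{Aj} and~\eqref{Bj} the crude bounds $|A_j|\lesssim e^{|\tau|\pi}|\rho|^{\,j-2}$ and $|B_j|\lesssim e^{|\tau|\pi}|\rho|^{\,j-3}$ hold; since the length of $\Gamma_N$ is of order $|\rho|^2$, these give only $O(1)$ for $\oint B_j/\varphi_j(\rho,\pi)\,d\lambda$ and for the quadratic term, which is not enough.

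The hard part, and the reason the theorem survives under the mere hypothesis $p,q\in L_1[0,\pi]$, is exactly this borderline estimate: I must upgrade the bounds on the integrals occurring in $A_j$ and in $B_j$ from $O(1)$ to a \emph{uniform} $o(1)$ as $|\rho|\to\infty$ on $\Gamma_N$. This is a uniform Riemann--Lebesgue statement, proved by approximating the $L_1$-combinations $W_j$ of $p,q$ (and the analogous integrands inside $B_j$) in the $L_1$-norm by smooth compactly supported functions: on the smooth part, integration by parts yields an extra factor $|\rho|^{-1}$, while the small $L_1$-remainder is controlled uniformly in $\rho$. With $|A_j|=o\big(e^{|\tau|\pi}|\rho|^{\,j-2}\big)$ and $|B_j|=o\big(e^{|\tau|\pi}|\rho|^{\,j-3}\big)$ uniformly, one obtains $\big|(A_j+B_j)/\varphi_j(\rho,\pi)\big|=o(|\rho|^{-1})$, so both $B_j/\varphi_j(\rho,\pi)$ and the squared term are $o(|\rho|^{-2})$; after multiplication by the length $O(|\rho|^2)$ the contour integral of the remainder is $o(1)$. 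Hence $\sum_{n=1}^N\varkappa_{nj}-\sum_{n=1}^N s_{nj}\to0$, which simultaneously yields the equivalence of convergence and, in the convergent case, formula~\eqref{traces}.
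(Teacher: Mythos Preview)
Your proposal is correct and matches the paper's approach essentially step for step. The paper likewise integrates $\lambda\big(\log(\Delta_j/\varphi_j(\rho,\pi))\big)'$ over $\Gamma_N=\{|\lambda|=(N+\tfrac12)^2\}$ (for $j=0$), integrates by parts, Taylor-expands the logarithm, proves the uniform Riemann--Lebesgue estimate you single out as the key step as a separate lemma (Lemma~\ref{Riemann-Lebesgue}) to obtain $A_{j}=o(e^{|\tau|\pi}|\rho|^{j-2})$ and $B_j=o(e^{|\tau|\pi}|\rho|^{j-3})$, and identifies $s_{nj}$ via the residues $\mathrm{Res}_{\lambda=\mu_{nj}}\big(A_j/\varphi_j(\rho,\pi)\big)=-s_{nj}$.
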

For the proof, we need the following lemma.
\begin{lemma} \label{Riemann-Lebesgue}
Consider real numbers $c < d$ and a function $f \in L_1[c, d].$ Then,
\begin{equation} \label{2 int}
\int_c^d e^{i \rho (t-c)} f(t) \, dt = o\big(e^{|\tau|(d-c)}\big), \quad \int_c^d e^{i \rho (d-t)} f(t) \, dt = o\big(e^{|\tau|(d-c)}\big), \quad \rho \to \infty,
\end{equation}
where $\tau = \mathrm{Im}\, \rho.$
\end{lemma}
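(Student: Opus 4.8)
The plan is to prove both estimates by a standard density argument: establish the bound first for indicator functions of subintervals, extend to step functions by linearity, and finally pass to arbitrary $f \in L_1[c,d]$ by approximation. Throughout I write $\tau = \mathrm{Im}\,\rho$ and exploit the pointwise bound $|e^{i\rho(t-c)}| = e^{-\tau(t-c)} \le e^{|\tau|(d-c)}$, valid for $t \in [c,d]$ because $t-c \in [0, d-c]$; the companion bound $|e^{i\rho(d-t)}| \le e^{|\tau|(d-c)}$ holds for the second integrand for the same reason. These bounds show that after dividing by $e^{|\tau|(d-c)}$ the integrands have modulus at most $1$, which is precisely the uniformity needed to control the approximation error regardless of the direction in which $\rho \to \infty$.

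First I would treat $f = \chi_{[\alpha, \beta]}$ with $c \le \alpha < \beta \le d$. A direct computation gives
$$\int_\alpha^\beta e^{i\rho(t-c)}\, dt = \frac{e^{i\rho(\beta - c)} - e^{i\rho(\alpha - c)}}{i\rho},$$
whence, bounding each exponential pointwise, $\big| \int_\alpha^\beta e^{i\rho(t-c)}\, dt \big| \le 2|\rho|^{-1} e^{|\tau|(d-c)}$. After division by $e^{|\tau|(d-c)}$ this is at most $2|\rho|^{-1}$, which tends to $0$ as $\rho \to \infty$ in any direction. By linearity the same conclusion, namely $e^{-|\tau|(d-c)} \int_c^d e^{i\rho(t-c)} g(t)\, dt \to 0$, then holds for every step function $g$ on $[c,d]$.

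Next, given arbitrary $f \in L_1[c,d]$ and $\varepsilon > 0$, I would choose a step function $g$ with $\|f - g\|_{L_1} < \varepsilon$, using density of step functions in $L_1$. Splitting $f = g + (f-g)$ and applying the pointwise bound to the remainder term yields
$$e^{-|\tau|(d-c)} \left| \int_c^d e^{i\rho(t-c)} f(t)\, dt \right| \le e^{-|\tau|(d-c)} \left| \int_c^d e^{i\rho(t-c)} g(t)\, dt \right| + \|f - g\|_{L_1}.$$
The first term on the right tends to $0$ by the step-function case, so the $\limsup$ as $\rho \to \infty$ of the left-hand side is at most $\varepsilon$; since $\varepsilon$ is arbitrary, the first estimate in~\eqref{2 int} follows. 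Finally, the second estimate reduces to the first under the reflection $u = c + d - t$, which maps $[c,d]$ onto itself, sends $e^{i\rho(d-t)}$ to $e^{i\rho(u-c)}$, and replaces $f$ by $\tilde f(u) = f(c+d-u) \in L_1[c,d]$ with the same $L_1$-norm.

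The argument presents no genuine obstacle, being a routine complex-variable refinement of the classical Riemann--Lebesgue lemma; the single point that requires care is the normalization by the factor $e^{|\tau|(d-c)}$. One must verify that this exact exponential both dominates the integrand pointwise, so that the approximation error stays below $\varepsilon$ \emph{uniformly} in $\rho$, and matches the decay obtained in the indicator-function estimate. This is what allows the two regimes $|\tau| \to \infty$ and $|\tau|$ bounded with $|\mathrm{Re}\,\rho| \to \infty$ to be handled simultaneously by a single density argument.
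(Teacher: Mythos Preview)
Your proof is correct and follows essentially the same density-in-$L_1$ strategy as the paper: normalize by $e^{|\tau|(d-c)}$, control the remainder via the pointwise bound on the exponential, and prove the decay explicitly on a dense subclass. The only difference is cosmetic---the paper approximates by $C^{(1)}$ functions and integrates by parts, whereas you approximate by step functions and compute directly on indicators; both yield the same $O(|\rho|^{-1}e^{|\tau|(d-c)})$ bound on the dense class.
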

\begin{proof}
%
By replacing the variable of integration, we reduce the both equalities in~\eqref{2 int} to the formula
\begin{equation}\label{m1}
I(\rho) := \int_0^z e^{i\rho t} g(t) \, dt = o\big(e^{|\tau|z}\big), \quad \rho \to \infty,
\end{equation}
with $z = d - c > 0$ and $g \in L_1[0, z].$
Now, we prove~\eqref{m1}. For an arbitrary $\varepsilon > 0,$ there exists a continuously differentiable function $\tilde g \in C^{(1)}[0, z]$ such that $\int_0^z|\tilde g(t) - g(t)|\, dt < \frac{\varepsilon}{2}.$ Denoting $ I_1(\rho) = \int_0^{z}e^{i \rho t} \tilde g(t) \, dt,$ we get
\begin{equation} \label{m2}
\left|\int_0^{z}e^{i\rho t} g(t) \, dt\right| \le \int_0^{z}\big|e^{i\rho t}\big| \, |\tilde g(t) - g( t)| \, dt + |I_1(\rho)| \le   e^{|\tau|z}\frac{\varepsilon}{2}+ |I_1(\rho)|.
\end{equation}
Integrating by parts $I_1(\rho),$ we get
$$|I_1(\rho)| \le  |\rho|^{-1} M_\varepsilon e^{|\tau|z}, \quad M_\varepsilon = 2 \sup_{t \in [0, z]} |\tilde g(t)| + \int_0^{\pi - a} |\tilde g'(t)| \, dt.$$
For sufficiently large $|\rho| > 2 \varepsilon^{-1} M_\varepsilon,$ we have $|I_1(\rho)| \le e^{|\tau|z}\frac{\varepsilon}{2}.$ Combining the latter estimate with~\eqref{m2}, for an arbitrary $\varepsilon > 0,$ we obtain
$$|I(\rho)| \le  \varepsilon e^{|\tau|z}, \quad |\rho| > 2\varepsilon^{-1} M_\varepsilon,$$
which means~\eqref{m1}.
\end{proof}

\begin{proof}
For definiteness, we assume that $j=0;$ computations for $j=1$ are analogous. Denote
$$S(\lambda) = \frac{\sin \rho \pi}{\rho}, \quad \Gamma_N = \Big\{ \lambda \in \mathbb C \colon |\lambda | = \Big(N + \frac12\Big)^2  \Big\}, \;N \in \mathbb N.$$ Then,
\begin{equation} \label{IN}
I_N := \sum_{n=1}^N (\lambda_{n0} - n^2) = \frac{1}{2 \pi i} \mathop{\int}_{\Gamma_N} \lambda \left( \ln \frac{\Delta_0(\lambda)}{S(\lambda)}\right)' \, d\lambda.
\end{equation}
For $\lambda \in \Gamma_N,$ we estimate
\begin{equation*} \label{jm}
\frac{\Delta_0(\lambda)}{S(\lambda)} = 1 + f(\lambda), \quad
f(\lambda) := \frac{A_{00}(\lambda)+A_{01}(\lambda)+B_{0}(\lambda)}{S(\lambda)}.
\end{equation*}
By the standard way (see e.g.~\cite{KuM_yurko}) one can prove that\begin{equation} \label{b0}
|S(\lambda)| \ge C \frac{e^{|\tau|\pi}}{|\rho|}, \quad  \lambda \in \Gamma_N.
\end{equation}
Since $\sin \rho \xi = \frac{e^{i \rho \xi} - e^{-i \rho \xi}}{2i}$ and $\cos \rho \xi = \frac{e^{i \rho \xi} + e^{-i \rho \xi}}{2},$ we can apply Lemma~\ref{Riemann-Lebesgue} to each integral in~\eqref{def A} and~\eqref{Bj}.
Then, we get
\begin{equation} \label{a00}
A_{00}(\lambda) = o\Big(\frac{e^{|\tau|\pi}}{\rho^2}\Big), \quad A_{01}(\lambda) = o\Big(\frac{e^{|\tau|\pi}}{\rho^2}\Big),\quad B_0(\lambda) = o\Big(\frac{e^{|\tau|\pi}}{\rho^3}\Big).
\end{equation}

Thus, due to~\eqref{b0} and \eqref{a00}, we have $f(\lambda) =  o(\rho^{-1})$ for $\lambda \in \Gamma_N,$ and for large $N,$ the increment of the argument $\Delta_0(\lambda)/S(\lambda)$ on the contour $\Gamma_N$ equals~$0.$ Integrating~\eqref{IN} by parts, we arrive at the formula
$$I_N = -\frac{1}{2 \pi i}\mathop{\int}_{\Gamma_N} \ln \Big(1 + f(\lambda)\Big) \, d\lambda. $$ Applying the Taylor expansion to $\ln (1 + f(\lambda)),$ taking into account that $f(\lambda) =  o(\rho^{-1})$ and $B_0(\lambda)/S(\lambda) = o(\rho^{-2}),$ we get
\begin{multline*}
I_N = -\frac{1}{2 \pi i}\mathop{\int}_{\Gamma_N} \left(\frac{A_{00}(\lambda)}{S(\lambda)}+\frac{A_{01}(\lambda)}{S(\lambda)} + o\Big(\frac{1}{\rho^2}\Big)\right)\, d\lambda =\\
= - \sum_{n=1}^N \mathop{\mathrm{Res}}_{\lambda = n^2} \frac{A_{00}(\lambda)}{S(\lambda)} - \sum_{n=1}^N \mathop{\mathrm{Res}}_{\lambda = n^2} \frac{A_{01}(\lambda)}{S(\lambda)} + o(1).
\end{multline*}
Computing $\displaystyle\mathop{\mathrm{Res}}_{\lambda = n^2} \, \Big(A_{00}(\lambda)/S(\lambda)\Big) = -a_{n0}$ and $\displaystyle\mathop{\mathrm{Res}}_{\lambda = n^2} \, \Big(A_{01}(\lambda)/S(\lambda)\Big) = -b_{n0},$ we arrive at the formula $I_N = \sum_{n=1}^N s_{n0} + o(1).$ As $N$ tends to $\infty,$ we obtain the statement of the theorem.
\end{proof}

The trigonometric systems of the functions $\{ \sqrt\frac\pi 2 \sin nt\}_{n \ge 1}$ and $\{ \sqrt\frac\pi 2 \cos (n-\frac12)t\}_{n \ge 1}$ are orthonormal bases in $L_2(0, \pi),$ being the systems of the eigenfunctions of the unperturbed operator $-y''$ with the boundary conditions~\eqref{bc} for $j=0$ and $j=1,$ respectively.
The series $\sum_{n=1}^\infty a_{nj}$ is the Fourier series of the function $p$ at the point $a,$ while the series $\sum_{n=1}^\infty b_{nj}$ is the Fourier series of the function $q$ at the point $b.$ For trigonometric series, there are known several convergence tests for the Fourier series of a function $f$ at a separate point.
In particular, it suffices to claim the absolute continuity of $f$ at the vicinity of the point so that the series converges at the value of $f$ at this point.
If $p \in AC[a-\varepsilon, a+\varepsilon]$ and $q \in AC[b-\varepsilon, b+\varepsilon]$ for some $\varepsilon > 0,$ then $\sum_{n=1}^\infty a_{nj} = p(a),$ $\sum_{n=1}^\infty b_{nj}=q(b),$ and
$$\sum_{n=1}^\infty \left(\lambda_{nj} - \Big(n-\frac{j}{2}\Big)^2\right) = p(a) + q(b),$$
which agrees with the results of the previous works~\cite{bon-traces,dob-traces}.
Note that the convergence of the series in~\eqref{traces} may hold even if the series  $\sum_{n=1}^\infty a_{nj}$ and $\sum_{n=1}^\infty b_{nj}$ diverge.

\begin{example}
Let $j=0,$ $a = \frac\pi3,$ and $b = \pi - a.$
We take the functions $p, q \in L_2(0, \pi)$ such that $$\int_0^\pi p(t) \sin nt \, dt =  \frac{\pi}{2n} \mathrm{sgn} \, \big(\sin na \big), \; n \ge 1, \quad q(t) = -p(\pi - t).$$
Then, $a_{n0} = \frac{\sqrt3}{2n}$ when $n$ is not a multiple of $3,$ and  $a_{n0} = 0$ otherwise. Herewith,
$$ \sum_{n = 1}^\infty a_{n0} = \frac{\sqrt3}{2}\sum_{k=1}^\infty \left(\frac{1}{3k - 2} + \frac{1}{3k-1}\right) > \frac{\sqrt3}{2} \sum_{k=1}^\infty \frac{1}{3k},$$
and the series diverges. On the other hand, applying the equalities $b = \pi -a$ and $q(t) = -p(\pi - t),$ we obtain $b_{n0} = -a_{n0}$ and  $s_{n0} = 0,$ $n \ge 1.$
Thus, the both series $\sum_{n=1}^\infty a_{n0}$ and $\sum_{n=1}^\infty b_{n0}$ diverge, but $\sum_{n=1}^\infty s_{n0}=0.$ By virtue of Theorem~\ref{th traces}, we arrive at the formula
$$ \sum_{n=1}^\infty (\lambda_{n0} - n^2) = 0.$$
The latter equality holds whenever $b = \pi -a$ and $q(t) = -p(\pi - t).$
\end{example}

\medskip
\noindent{\bf Acknowledgments.} This work was supported financially by project no.~24-71-10003 of the Russian Science Foundation, see~\url{ https://rscf.ru/en/project/24-71-10003/}.

\bigskip

\section*{Appendix: the proof of formula~\eqref{rep}}
For definiteness, we consider the case $j=0.$ Expanding the determinant into the sum by the third column and then into the sums by the second column, we get
 $$\Delta_0(\lambda) = D_{00} +D_{01} + D_{10} +D_{11}, \quad D_{00} := \begin{vmatrix}
 \displaystyle\frac{\sin \rho \pi}{\rho} &  \displaystyle 0 & 0 \\[3mm]
 \displaystyle\frac{\sin \rho a}{\rho} & -1 & 0 \\[3mm]
 \displaystyle\frac{\sin \rho b}{\rho} & 0 & -1
\end{vmatrix},$$
$$
D_{01} := \begin{vmatrix}
 \displaystyle\frac{\sin \rho \pi}{\rho} & \displaystyle \int_0^\pi \frac{\sin \rho(\pi - t)}{\rho} p(t) \,dt& 0 \\[3mm]
 \displaystyle\frac{\sin \rho a}{\rho} & \displaystyle\int_0^a \frac{\sin \rho(a - t)}{\rho} p(t) \, dt & 0 \\[3mm]
 \displaystyle\frac{\sin \rho b}{\rho} & \displaystyle\int_0^b \frac{\sin \rho(b - t)}{\rho} p(t) \, dt & -1
\end{vmatrix},
\quad D_{10} := \begin{vmatrix}
 \displaystyle \frac{\sin \rho \pi}{\rho} & 0 & \displaystyle \int_0^\pi \frac{\sin \rho(\pi - t)}{\rho} q(t) \, dt \\
 \displaystyle\frac{\sin \rho a}{\rho} & -1 & \displaystyle\int_0^a \frac{\sin \rho(a - t)}{\rho} q(t) \, dt \\
 \displaystyle\frac{\sin \rho b}{\rho} & 0 & \displaystyle\int_0^b \frac{\sin \rho(b - t)}{\rho} q(t) \, dt
\end{vmatrix}, $$
$$
D_{11} = \begin{vmatrix}
 \displaystyle\frac{\sin \rho \pi}{\rho} & \displaystyle \int_0^\pi \frac{\sin \rho(\pi - t)}{\rho} p(t) \,dt & \displaystyle \int_0^\pi \frac{\sin \rho(\pi - t)}{\rho} q(t) \, dt \\[3mm]
 \displaystyle\frac{\sin \rho a}{\rho} & \displaystyle\int_0^a \frac{\sin \rho(a - t)}{\rho} p(t) \, dt & \displaystyle\int_0^a \frac{\sin \rho(a - t)}{\rho} q(t) \, dt\\[3mm]
 \displaystyle\frac{\sin \rho b}{\rho} & \displaystyle\int_0^b \frac{\sin \rho(b - t)}{\rho} p(t) \, dt &\displaystyle\int_0^b \frac{\sin \rho(b - t)}{\rho} q(t) \, dt
\end{vmatrix}.
$$
Clearly, $D_{00} = \frac{\sin \rho \pi}{\rho},$ which gives us the first term in~\eqref{rep}. Consider $D_{01}:$
\begin{multline*}
D_{01} =  \frac{\sin \rho a}{\rho} \int_0^\pi \frac{\sin \rho(\pi - t)}{\rho} p(t)\,dt
-\frac{\sin \rho \pi}{\rho}\int_0^a \frac{\sin \rho(a - t)}{\rho} p(t) \, dt= \\[2mm]
=  \frac{\sin \rho a}{\rho} \int_a^\pi \frac{\sin \rho(\pi - t)}{\rho} p(t)\,dt + \frac{1}{\rho^2}\int_0^a \big[\sin \rho a \sin \rho(\pi - t)-\sin \rho \pi \sin \rho(a - t)\big] p(t) \,dt.
\end{multline*}
Further, we need the trigonometric formulae
\begin{equation} \label{trig form}
\left.\begin{array}{c}
\sin \alpha \sin (\beta - \gamma) - \sin \beta \sin (\alpha - \gamma)= \sin \gamma \sin (\beta - \alpha), \\[2mm]
\sin \alpha \cos (\beta - \gamma) - \cos \beta \sin (\alpha - \gamma)= \sin \gamma \cos (\beta - \alpha).
\end{array}\right\}
\end{equation}
Applying the first formula from~\eqref{trig form}, we obtain
$$
\sin \rho a \sin \rho(\pi - t)-\sin \rho \pi \sin \rho(a - t) = \sin \rho t \sin \rho(\pi - a),
$$
and we arrive at the equality $D_{01} = A_{00}(\lambda).$ Analogously, $D_{10} = A_{01}(\lambda).$

Let us put $B_0(\lambda)=D_{11}$ and bring $B_0(\lambda)$ to the needed form~\eqref{Bj}. We expand subsequently the determinant $D_{11}$ into the sums by the second and the third columns, splitting the integrals by the limits: $\int_0^\pi = \int_0^a + \int_a^b + \int_b^\pi$ and $\int_0^b = \int_0^a + \int_a^b.$ Let us write out the term that contains the integrals of the functions $p$ and $q$ only on the segments $[0, a]:$
\begin{equation} \label{summand1}
\begin{vmatrix}
\displaystyle \frac{\sin \rho \pi}{\rho} & \displaystyle \int_0^a \frac{\sin \rho(\pi - t)}{\rho} p(t) \,dt & \displaystyle \int_0^a \frac{\sin \rho(\pi - t)}{\rho} q(t) \, dt \\[3mm]
\displaystyle \frac{\sin \rho a}{\rho} & \displaystyle\int_0^a \frac{\sin \rho(a - t)}{\rho} p(t) \, dt & \displaystyle\int_0^a \frac{\sin \rho(a - t)}{\rho} q(t) \, dt\\[3mm]
\displaystyle \frac{\sin \rho b}{\rho} & \displaystyle\int_0^a \frac{\sin \rho(b - t)}{\rho} p(t) \, dt &\displaystyle\int_0^a \frac{\sin \rho(b - t)}{\rho} q(t) \, dt
\end{vmatrix}.
\end{equation}  Subtract the second row multiplied by $\cos \rho(\pi - a)$ from the first row, and subtract the second row multiplied by $\cos \rho(b - a)$ from the third row.
Denoting $x_1 = \frac{\sin \rho (\pi - a)}{\rho}$ and $x_3 = \frac{\sin \rho (b - a)}{\rho},$ we get
$$\begin{vmatrix}
x_1 \cos \rho a & x_1 \displaystyle \int_0^a \cos \rho(a-t) p(t) \,dt & x_1\displaystyle \int_0^a \cos \rho(a-t) q(t) \,dt \\[3mm]
\displaystyle\frac{\sin \rho a}{\rho} & \displaystyle\int_0^a \frac{\sin \rho(a - t)}{\rho} p(t) \, dt & \displaystyle\int_0^a \frac{\sin \rho(a - t)}{\rho} q(t) \, dt\\
x_3 \cos \rho a & x_3\displaystyle \int_0^a \cos \rho(a-t) p(t) \,dt & x_3\displaystyle \int_0^a \cos \rho(a-t) q(t) \,dt
\end{vmatrix} = 0,$$
since the first and the third row are linearly dependent. Thus, determinant~\eqref{summand1} equals zero. This means that the term containing as factors  the integral of the function $p$ on $[0, a]$ and the integral of the function $q$ on $[0, a]$ is absent in~\eqref{Bj}.

Let us write out the term containing as factors the integral of $p$ on $[0, a]$ and the integral of $q$ on $[a, b]:$
$$\begin{vmatrix}
 \displaystyle \frac{\sin \rho \pi}{\rho} & \displaystyle \int_0^a \frac{\sin \rho(\pi - t)}{\rho} p(t) \,dt & \displaystyle \int_a^b \frac{\sin \rho(\pi - t)}{\rho} q(t) \, dt \\[3mm]
 \displaystyle \frac{\sin \rho a}{\rho} & \displaystyle\int_0^a \frac{\sin \rho(a - t)}{\rho} p(t) \, dt & 0\\[3mm]
 \displaystyle \frac{\sin \rho b}{\rho} & \displaystyle\int_0^a \frac{\sin \rho(b - t)}{\rho} p(t) \, dt &\displaystyle\int_a^b \frac{\sin \rho(b - t)}{\rho} q(t) \, dt
\end{vmatrix}.
 $$
Subtract the third row multiplied by $\cos \rho(\pi - b)$ from the first row, then subtract the second row multiplied by $\cos \rho(b-a)$ from the third row. We get
\begin{multline*}
\frac{\sin \rho(\pi - b)}{\rho^3} \begin{vmatrix}
\cos \rho b & \displaystyle \int_0^a \cos \rho(b-t) p(t) \,dt & \displaystyle \int_a^b \cos \rho(b-t) q(t) \, dt \\
\sin \rho a & \displaystyle\int_0^a \sin \rho(a - t) p(t) \, dt & 0\\
\sin \rho (b-a) \cos \rho a & \sin \rho(b-a)\displaystyle\int_0^a \cos \rho(a-t) p(t) \, dt &\displaystyle\int_a^b \sin \rho(b - t) q(t) \, dt
\end{vmatrix} =
\\=\frac{\sin \rho(\pi - b)}{\rho^3} \Bigg(\int_a^b \cos \rho(b-\xi) q(\xi)\, d\xi \int_0^a\big[ \sin \rho a  \sin \rho(b-t) - \sin \rho b \sin \rho(a-t) \big] p(t)\, dt -
\\-\int_a^b \sin \rho(b-\xi) q(\xi)\, d\xi \int_0^a \big[ \sin \rho a \cos \rho(b-t)  - \cos \rho b \sin \rho(a-t)\big]p(t)\, dt\Bigg).\end{multline*}
Applying formulae~\eqref{trig form} to the terms in the square brackets, we arrive at the expression
\begin{multline*}
\frac{\sin \rho(\pi - b)}{\rho^3} \int_0^a \sin \rho t p(t) \, dt  \int_a^ b \big( \sin \rho(b-a)\cos \rho(b-\xi)  - \cos \rho(b-a)  \sin \rho(b-\xi) \big)q(\xi)\, d\xi =\\= \frac{\sin \rho(\pi - b)}{\rho^3} \int_0^a \sin \rho t p(t) \, dt \int_a^ b \sin \rho(\xi-a) q(\xi)\, d\xi,\end{multline*}
which gives the first term in the first pair of brackets in~\eqref{Bj}. The other terms in the expansion of the determinant $D_{11}$ are considered analogously.
\end{document}